\newcommand{\bivec}[1]{\accentset{\leftrightarrow}{#1}}
\newcommand{\bigslant}[2]{{\raisebox{.2em}{$#1$}\left/\raisebox{-.2em}{$#2$}\right.}}
\title{Zero sum cycles in %group-labelled
complete digraphs}
\author
{
Tam\'{a}s M\'{e}sz\'{a}ros\thanks{Berlin, Email: \texttt{tmeszaros87@gmail.com}.}
\and
Raphael Steiner\thanks{Institut für Mathematik, Technische Universit\"at Berlin, Germany. Funded by DFG-GRK 2434 Facets of Complexity. Email: \texttt{steiner@math.tu-berlin.de}.}
}
\date{\today}
\newtheorem{thm}{Theorem}
\newtheorem{corollary}{Corollary}
\newtheorem{lemma}{Lemma}
\newtheorem{obs}{Observation}
\newtheorem{conj}{Conjecture}
\newtheorem{ques}{Question}
\begin{document}

\maketitle

\begin{abstract}
Given a non-trivial finite Abelian group $(A,+)$, let $n(A) \ge 2$ be the smallest integer such that for every labelling of the arcs of the bidirected complete graph $\bivec{K}_{n(A)}$ with elements from $A$ there exists a directed cycle %$\bivec{K}_{n(q)}$ 
for which the sum of the arc-labels is zero. The problem of determining $n(\mathbb{Z}_q)$ for integers $q \ge 2$ was recently considered by Alon and Krivelevich~\cite{AK20}, who proved that $n(\mathbb{Z}_q)=O(q \log q)$. Here we improve their result and show that $n(\mathbb{Z}_q)$ grows linearly. More generally we prove that for every finite Abelian group $A$ we have $n(A) \le 8|A|$, while if $|A|$ is prime then $n(A) \le \frac{3}{2}|A|$. 

As a corollary we also obtain that every $K_{16q}$-minor contains a cycle of length divisible by $q$ for every integer $q \ge 2$, which improves a result from~\cite{AK20}. %We conjecture that $n(A)=|A|+1$ for every finite abelian group $A$.
\end{abstract}

\section{Introduction}

Zero-sum problems are a branch of Ramsey theory with an algebraic flavour. One of the earliest results of this form is the Erd\H{o}s-Ginzburg-Ziv Theorem~\cite{EGZ61} which is considered to be the common ancestor of many zero-sum problems. It states that if $k\mid m$ then among $m+k-1$ integers one can always find $m$ whose sum is divisible by $k$. Over time many problems of this and similar type have been considered, and zero-sum problems have established themselves as a well studied and substantial branch of contemporary combinatorics.

A much studied example of a zero-sum problem for graphs goes as follows. What is the smallest number $n$ so that any complete graph with edges labelled by the elements of a finite group $G$ contains a subgraph of a prescribed
type in which the total weight of the edges is 0 in $G$? For examples and an overview of such results, the interested reader may consult e.g.~\cite{AL89,AC93} and the survey article~\cite{C96}. Here we consider this problem for complete directed graphs, where the desired subgraph is a directed cycle. This scenario was recently also considered by Alon and Krivelevich~\cite{AK20}.

Given an integer $n \ge 1$, we denote by $\bivec{K}_n$ the complete digraph consisting of $n$ vertices and whose arc-set consists of all ordered pairs of vertices. Given a set $A$, for us an $A$-arc-labeling of $\bivec{K}_n$ is simply a function $w:A(\bivec{K}_n) \rightarrow A$. %and let $w$:A(\bivec{K}_n) \rightarrow \mathbb{Z}$ be an arc-labelling of $\bivec{K}_n$.
%Whenever the group $A$ will be clear from the context, we will omit it from the notation.
If $(A,+)$ is an Abelian group, we say that $w$ is \emph{zero-sum-free} if there is no directed cycle for which the sum of arc-labels is zero. In this paper, for a non-trivial finite Abelian group $(A,+)$, we are interested in determining the smallest integer $n(A)\geq 2$ such that $\bivec{K}_{n(A)}$ has no zero-sum-free $A$-arc-labeling, i.e. for every $A$-arc-labeling there is a directed cycle for which the sum of the arc-labels is zero.

In~\cite{AK20} Alon and Krivelevich proved that $n(\mathbb{Z}_q)\leq \lceil2q\ln q\rceil$ for every integer $q\ge 2$ and $n(\mathbb{Z}_p)\leq 2p-1$ for every prime number $p$. Here we improve their results and show that $n(A)$ grows at most linearly for every non-trivial finite Abelian group $A$. 
\begin{thm}\label{thm:main1}
For every non-trivial finite Abelian group $(A,+)$ we have $n(A) \le 8|A|$.
\end{thm}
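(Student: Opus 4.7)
The plan is to argue by contradiction: assume $\bivec{K}_n$ with $n=8|A|$ carries an $A$-arc-labeling $w$ with no zero-sum directed cycle, and derive an impossibility.

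The first key observation I would use is that the sum along any directed cycle is invariant under the substitution $w(u,v)\mapsto w(u,v)+\phi(v)-\phi(u)$ for an arbitrary vertex potential $\phi:V\to A$, since the potential differences telescope. Fixing a vertex $v_0$ and setting $\phi(u):=-w(v_0,u)$ (with $\phi(v_0):=0$) therefore lets me assume $w(v_0,u)=0$ for all $u\neq v_0$. Zero-sum-freeness then forces the return labels $w(u,v_0)\in A\setminus\{0\}$, since otherwise the $2$-cycle $v_0\to u\to v_0$ would be zero-sum.

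Next, a pigeonhole step locates a large homogeneous subset: since the $n-1$ return labels live in the $(|A|-1)$-element set $A\setminus\{0\}$, some $U\subseteq V\setminus\{v_0\}$ with $|U|\geq\lceil(n-1)/(|A|-1)\rceil$ has $w(u,v_0)=a$ constant for some $a\neq 0$. Every directed cycle $C$ contained in $\bivec{K}_U$ then extends through $v_0$ (using that out-labels from $v_0$ vanish and in-labels into $v_0$ from $U$ all equal $a$) to a cycle in $\bivec{K}_n$ of sum $s(C)+a$, which must itself be nonzero. Combined with zero-sum-freeness, this means no directed cycle inside $\bivec{K}_U$ has sum in the forbidden set $\{0,-a\}$.

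The third step would be to iterate the normalize-then-pigeonhole procedure inside $\bivec{K}_U$, with a new pivot $v_1\in U$, a new common return value $-a_2\notin\{0,-a\}$, and a new homogeneous subset $U'\subseteq U$. After $k$ iterations, the forbidden cycle-sum set for the innermost subset has the form $\{0,-a_1\}+\{0,-a_2\}+\cdots+\{0,-a_k\}\subseteq A$, and the argument terminates once this sumset covers $A$, since then even the $2$-cycles of the innermost subset would have forbidden sum, contradicting its non-triviality.

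The hard part will be to carry out this iteration within the tight budget $n=8|A|$, since a naive bookkeeping loses a factor $\sim|A|$ per step and yields only a super-polynomial bound. Hitting the linear bound $8|A|=2^{3}|A|$ will likely require invoking Kneser/Cauchy--Davenport-type sumset lower bounds (so that the forbidden sumset essentially doubles at each iteration, terminating after $O(\log|A|)$ steps) while carefully amortizing the subset-shrinkage; the pivots $a_i$ would need to be chosen to maximize sumset growth and avoid collapsing into a proper subgroup stabilizer. The concrete constant $8=2^{3}$ most plausibly reflects the balance point in this amortized analysis, with the last few ``doubling rounds'' each costing a factor of $2$ in the ambient vertex count.
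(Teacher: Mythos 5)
Your opening moves match the paper's: the potential/switching normalization (the paper formalizes this as ``switching'' and its Observation~1), and the idea of accumulating a forbidden sumset of the form $\{0,a_1\}+\cdots+\{0,a_k\}$ that is eventually forced to cover $A$, at which point a path can be closed into a zero-sum cycle. However, the plan has two genuine gaps. First, the pigeonhole step is fatal to the iteration: after one round you get $|U|\ge (n-1)/(|A|-1)\approx 8$, a \emph{constant} number of vertices, so a second round is already impossible; no amortization can rescue a scheme that loses a factor of $|A|$ per step when the total budget is $8|A|$. The paper avoids homogeneous subsets entirely. It builds a growing chain of triangles $x_0,y_0,x_1,y_1,\dots$, spending only two fresh vertices per step: switching makes all arcs out of the current endpoint $x_{i-1}$ equal to $0$, and then a single pair $y_{i-1},x_i$ with $w(y_{i-1},x_i)\neq 0$ suffices to enlarge the sumset by at least one element. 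Growth by $+1$ per step at a cost of $2$ vertices per step is exactly what makes the bound linear; no doubling is needed or used. (As a smaller point, your ``extends through $v_0$ to a cycle of sum $s(C)+a$'' is really a statement about \emph{paths} in $U$ closed via $u'\to v_0\to u$, not about cycles of $U$; inserting $v_0$ into a cycle changes the sum by $a-w(u_i,u_{i+1})$, not by $a$.)

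Second, the Kneser/Cauchy--Davenport doubling you hope for is unavailable in a general Abelian group: sumsets $\{0,a_1\}+\cdots+\{0,a_k\}$ can stabilize at a proper subgroup (e.g.\ in $\mathbb{Z}_2^m$ one has $\{0,a\}+\{0,a\}=\{0,a\}$), and then no choice of the next $a_i$ within that subgroup helps. The paper's key missing ingredient here is a recursion on the group: if the accumulated sumset contains a non-trivial subgroup $B\le A$, pass to the quotient $A/B$ on the remaining vertices and invoke the statement for the smaller group, lifting the outcome back via the $B$-coset structure. This quotient recursion, together with a second induction on $|A|$ in the proof of the theorem itself (either the labels on a large vertex set fall into a proper subgroup $B$, whence $n(B)\le 8|B|\le 4|A|$ applies, or the labelling is $A$-complete at some pair $u,v$ and one closes up through the arc $(v,u)$), is where the constant $8$ actually comes from --- not from a ``last few doubling rounds'' analysis. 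Cauchy--Davenport does appear in the paper, but only for the refined bound $n(\mathbb{Z}_p)\le\frac{3p-1}{2}$ in the prime case, where stabilization at a proper subgroup cannot occur.
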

Our second main result improves the upper bound of~\cite{AK20} in the case of primes.
\begin{thm}\label{thm:main2}
For every prime $p\geq 3$ we have $n(\mathbb{Z}_p) \le \frac{3p-1}{2}$.
\end{thm}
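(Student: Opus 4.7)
I plan to argue by contradiction: assume $n=\lceil\tfrac{3p-1}{2}\rceil$ and $w:A(\bivec{K}_n)\to\mathbb{Z}_p$ is zero-sum-free. First, I would normalize $w$ by a potential shift along a directed Hamiltonian path $v_1\to v_2\to\cdots\to v_n$ in $\bivec{K}_n$: setting $\phi(v_1):=0$, $\phi(v_{i+1}):=\phi(v_i)+w(v_iv_{i+1})$, and $w'(uv):=w(uv)+\phi(u)-\phi(v)$ preserves every directed-cycle sum while forcing $w'(v_iv_{i+1})=0$ for all $i$. Writing $a_{ij}:=w'(v_iv_j)$ and $b_{ij}:=w'(v_jv_i)$ for $i<j$, the sum of any directed cycle depends only on the non-path arcs it uses.

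Next, I would read off the constraints implied by simple directed cycles. The back-arc closure $v_i\to v_{i+1}\to\cdots\to v_j\to v_i$ yields $b_{ij}\ne 0$ for all $i<j$, so every back-arc label lies in $\mathbb{Z}_p\setminus\{0\}$. More generally, for each $i<j$ and each back chain $v_j=v_{m_0}>v_{m_1}>\cdots>v_{m_r}=v_i$ through intermediate points in $(i,j)$, the cycle $v_i\to v_j\to v_{m_1}\to\cdots\to v_{m_{r-1}}\to v_i$ is simple and has sum $a_{ij}+\sum_{l=1}^r b_{m_l,m_{l-1}}$. Writing $P_{ij}$ for the set of all such back-chain sums from $v_j$ to $v_i$, zero-sum-freeness is equivalent to $-a_{ij}\notin P_{ij}$ for every $1\le i<j\le n$, so in particular $|P_{ij}|\le p-1$. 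A further specialisation to the ``nested'' 2-chord cycles gives the uniform constraint $a_{ab}+b_{dc}\ne 0$ whenever $[a,b]\subseteq[d,c]$.

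The final step is to use $n=\tfrac{3p-1}{2}$ and the prime setting to show that some $P_{ij}$ exhausts all of $\mathbb{Z}_p$, contradicting $|P_{ij}|\le p-1$. The starting point is pigeonhole on the $n$ partial sums $\tau_k:=\sum_{l=k}^{n-1}b_{l,l+1}$ of unit back arcs along the reversed Hamiltonian path: since $n>p$, standard convexity gives at least $\tfrac{p-1}{2}$ coincidence pairs $i<j$ with $\tau_i=\tau_j$, each of which contributes $0\in P_{ij}$ and hence forces $a_{ij}\ne 0$. Iterating this on further partial-sum sequences built from shortcut back arcs, and cross-checking with the nested 2-chord constraints, should force the joint image of the $P_{ij}$'s to cover $\mathbb{Z}_p$ for some extreme pair (e.g.\ $(1,n)$), yielding the desired contradiction. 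The main obstacle, and the source of the improvement from the Alon--Krivelevich bound $2p-1$ to $\tfrac{3p-1}{2}$, lies in turning arithmetic zero-sum combinations of back-arc labels into \emph{simple} directed cycles: a random subset of back arcs summing to zero need not form a single back chain closable by a forward chord, and it is precisely the surplus of $\tfrac{p-1}{2}$ vertices beyond the pigeonhole threshold $p$ that provides the room to realise such an Erd\H{o}s--Ginzburg--Ziv-style zero-sum subset as an admissible back chain, and hence as a genuine zero-sum cycle.
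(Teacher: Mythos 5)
Your proposal is a plan rather than a proof, and the decisive step is missing. The normalization along a Hamiltonian path is fine (it is exactly the paper's switching operation in disguise), and the necessary conditions you extract from simple cycles ($b_{ij}\neq 0$, $-a_{ij}\notin P_{ij}$) are correct. But the entire content of the theorem lies in the final paragraph, where you write that iterating pigeonhole on partial-sum sequences and cross-checking with nested 2-chord constraints ``should force'' some $P_{ij}$ to cover $\mathbb{Z}_p$. No mechanism is given for this, and you yourself identify the obstruction precisely: an arbitrary zero-sum combination of back-arc labels does not correspond to a simple directed cycle. Asserting that the surplus of $\tfrac{p-1}{2}$ vertices ``provides the room'' to realise such a subset as an admissible back chain is exactly the claim that needs proof, and nothing in the proposal supplies it. A simple pigeonhole on the $n>p$ partial sums $\tau_k$ only produces coincidence pairs, i.e.\ the single element $0\in P_{ij}$ for certain pairs; there is no argument showing how to accumulate $p$ distinct realisable sums for one fixed pair $(i,j)$.

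The paper closes this gap with a different structural device. It first proves a local lemma: every zero-sum-free $\mathbb{Z}_p$-labelling of $\bivec{K}_3$ admits a vertex $v$ and two nontrivial paths into $v$ whose sums, together with $0$, are three pairwise distinct values. It then builds a chain of $\tfrac{p-1}{2}$ vertex-disjoint $\bivec{K}_4$-blocks joining $x_0$ to $x_{(p-1)/2}$, using switchings so that each block offers three path-sum choices forming a $3$-element set $\{0,a_j^{(1)},a_j^{(2)}\}$; because the blocks are vertex-disjoint, \emph{every} combination of local detours concatenates into a genuine simple path, so the full sumset $\{0,a_1^{(1)},a_1^{(2)}\}+\cdots$ is realised by simple $x_0$--$x_{(p-1)/2}$ paths. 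Cauchy--Davenport then gives $|A_{(p-1)/2}|\geq\min\bigl(p,\,2\cdot\tfrac{p-1}{2}+1\bigr)=p$, and closing with the arc $(x_{(p-1)/2},x_0)$ produces a zero-sum cycle. The vertex count $1+3\cdot\tfrac{p-1}{2}=\tfrac{3p-1}{2}$ is exactly where the bound comes from. To salvage your approach you would need an analogue of this disjoint-block construction (or some other device guaranteeing that the additive combinations you count are simultaneously realisable as simple cycles); without it the argument does not go through.
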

%
%\begin{thm}\label{thm:main2}
%For every prime number $p \ge 3$ we have $n(\mathbb{Z}_p) \le \frac{3p-1}{2}$.
%\end{thm}

The motivation of Alon and Krivelevich for studying the function $n(\mathbb{Z}_q)$ came from a related problem about the containment of cycles with %divisible 
particular lengths in minors of complete graphs. Our improvement on the upper bound on $n(\mathbb{Z}_q)$ also directly improves the corresponding result from \cite{AK20}. 
\begin{corollary}\label{cor:easy}
Let $q \ge 2$ be an integer. Then every $K_{2n(\mathbb{Z}_q)}$-minor contains a cycle of length divisible by $q$. In particular, every $K_{16q}$-minor contains a cycle of length divisible by $q$, and if $p$ is a prime, then every $K_{3p-1}$-minor contains a cycle of length divisible by $p$. 
\end{corollary}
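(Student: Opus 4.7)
The plan is to reduce Corollary~\ref{cor:easy} to the defining property of $n(\mathbb{Z}_q)$ by turning a $K_{2n(\mathbb{Z}_q)}$-minor of a graph $G$ into a $\mathbb{Z}_q$-arc-labeling of $\bivec{K}_{n(\mathbb{Z}_q)}$ whose zero-sum directed cycles correspond to cycles in $G$ of length divisible by $q$. Let $V_1,\ldots,V_N$ be the branch sets of a $K_N$-minor, $N:=2n(\mathbb{Z}_q)$. I would pair them as $(V_{2k-1},V_{2k})$ for $k\in[n]$, $n:=n(\mathbb{Z}_q)$; pick spanning trees $T_{2k-1},T_{2k}$ of the two halves and an edge $f_k=s_kt_k$ joining them with $s_k\in V_{2k-1}$, $t_k\in V_{2k}$; and for every ordered $(k,\ell)$ with $k\neq\ell$ pick an edge $e_{k\ell}=u_{k\ell}v_{k\ell}$ with $u_{k\ell}\in V_{2k}$, $v_{k\ell}\in V_{2\ell-1}$ (these exist because $V_{2k}$ and $V_{2\ell-1}$ are adjacent branch sets of the $K_N$-minor).

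Next, define the arc-labeling
\[
w(k,\ell) := d_{T_{2k}}(t_k, u_{k\ell}) + d_{T_{2\ell-1}}(v_{k\ell}, s_\ell) + 2 \pmod q .
\]
By the defining property of $n(\mathbb{Z}_q)$, some directed cycle $k_1\to k_2\to\cdots\to k_m\to k_1$ in $\bivec{K}_n$ has zero $w$-sum modulo $q$. I would then pull this back to $G$ by replacing each arc $(k_i,k_{i+1})$ with the concatenation of the $T_{2k_i}$-path from $t_{k_i}$ to $u_{k_ik_{i+1}}$, the edge $e_{k_ik_{i+1}}$, the $T_{2k_{i+1}-1}$-path from $v_{k_ik_{i+1}}$ to $s_{k_{i+1}}$, and the edge $f_{k_{i+1}}$. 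The resulting closed walk in $G$ has length congruent to $\sum_i w(k_i,k_{i+1})\equiv 0\pmod q$.

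The one real obstacle is to verify that this closed walk is an honest simple cycle, since a closed walk of length divisible by $q$ need not decompose into cycles each of length divisible by $q$. This is the reason for the asymmetric routing (enter each pair at its $V_{2\cdot-1}$-half via some $e_{\cdot,\cdot}$ and leave from its $V_{2\cdot}$-half via $f_{\cdot}$ followed by the next $e_{\cdot,\cdot}$): since the $k_i$ are distinct, each pair $k_i$ is used exactly once, its two halves are each traversed by a single simple tree path, and the glue edges $f_{k_i}$ and $e_{k_i k_{i+1}}$ are pairwise distinct by construction, so no vertex of $G$ is repeated. Finally, substituting $n(\mathbb{Z}_q)\le 8q$ from Theorem~\ref{thm:main1} and $n(\mathbb{Z}_p)\le (3p-1)/2$ from Theorem~\ref{thm:main2} yields the claimed $K_{16q}$- and $K_{3p-1}$-bounds.
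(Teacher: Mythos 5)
Your construction is correct and is essentially the paper's own argument: pair the $2n(\mathbb{Z}_q)$ branch sets, label the arc $(k,\ell)$ of $\bivec{K}_{n(\mathbb{Z}_q)}$ by the length (mod $q$) of the canonical connecting path through the trees plus the glue edges, and expand a zero-sum directed cycle into a cycle of $G$ of length divisible by $q$. The paper phrases the two halves as super nodes $X_i^+,X_i^-$ with the unique connecting edges playing the role of your $f_k$ and $e_{k\ell}$, but the bookkeeping and the simple-cycle verification are the same.
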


\section{Proofs}

Let $n \ge 2$ be an integer and $(A,+)$ a non-trivial finite Abelian group. We say that an $A$-arc labeling $w$ of $\bivec{K}_n$ is \emph{$A$-complete at the vertices $u,v\in V(\bivec{K}_n)$} if for every $a\in A$ there is a directed path $P_a$ from $u$ to $v$ such that $\sum_{(x,y)\in P_a}w(x,y)=a$. Given a vertex $v$ of $\bivec{K}_n$ and an element $c \in A$ 
let $w'$ %:A(\bivec{K}_n) \rightarrow \mathbb{Z}$ 
be the $A$-arc-labelling defined by 
\begin{equation*}
w'(x,y)=\begin{cases}
w(x,y) & x, y \neq v \cr
w(x,y)+c & x=v \cr
w(x,y)-c & y=v
\end{cases}.   
\end{equation*}
This operation on arc labellings is called a % say that $w'$ is obtained from $w$ by 
\emph{switching} \emph{by} $c$ \emph{at} $v$, and is denoted by $S_{c,v}$. Two $A$-arc-labellings of $\bivec{K}_n$ are \emph{switching-equivalent} if we can obtain one from the other by a sequence of switchings. The following properties of switchings follow directly from the definitions, and will be very important for our investigations.
\begin{obs}\label{obs:switching}
Let %$w$ be an $A$-arc-labelling of $\bivec{K}_n$, $v$ some vertex of $\bivec{K}_n$ and $c$ an element of $A$. 
%Then a switching by $c$ at $v$ preserves the sums of arc-labels along directed cycles. In particular, if $w$ 
$w$ and $w'$ be switching-equivalent $A$-arc-labellings of $\bivec{K}_n$, then the following hold. 
\begin{itemize}
    \item $w$ is zero-sum-free if and only if $w'$ is so.
    \item $w$ is $A$-complete at $u,v$ if and only if $w'$ is so.
\end{itemize}
\end{obs}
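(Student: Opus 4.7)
The plan is to reduce both claims to the case of a single switching $S_{c,z}$ (any switching-equivalent pair is connected by a finite sequence), and then analyze the effect of this elementary operation on the weight of an arbitrary directed cycle and on the weight of an arbitrary directed path from $u$ to $v$. The key observation I would rely on is a purely local one: at any interior vertex of a simple directed cycle or path, exactly one arc enters and exactly one arc leaves, so the $+c$ and $-c$ that $S_{c,z}$ puts on these two arcs cancel.

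For the first bullet, let $C$ be any directed cycle in $\bivec{K}_n$ and consider its $w'$-weight. If $z \notin V(C)$, the arcs of $C$ are untouched, so $\sum_{(x,y) \in C} w'(x,y) = \sum_{(x,y) \in C} w(x,y)$. If $z \in V(C)$, then because $C$ is a directed cycle, $z$ has exactly one in-arc $(x_0,z)$ and one out-arc $(z,y_0)$ in $C$; by the definition of $S_{c,z}$ the former contributes $-c$ and the latter $+c$ to the change in weight, and all other arcs of $C$ are unchanged. In either case the weight of $C$ is preserved, so the set of cycle weights is identical for $w$ and $w'$; in particular, $w$ is zero-sum-free iff $w'$ is.

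For the second bullet, I would compare, for an arbitrary directed path $P$ from $u$ to $v$, its $w$-weight and its $w'$-weight. The same cancellation argument applies whenever $z$ is an interior vertex of $P$ (one in-arc, one out-arc on $P$), and of course the weight is unchanged whenever $z \notin V(P)$. The only situations where the weight can actually change are $z = u$ (then $u$ has no in-arc on $P$ but one out-arc, so the weight shifts by $+c$) and $z = v$ (symmetrically the weight shifts by $-c$). In all cases, the map $P \mapsto \text{weight}(P)$ transforms by a global translation by some fixed element of $A$ (namely $0$, $+c$, or $-c$). Since $A$ is invariant under translation by any of its elements, the set of realized $u$-$v$ path weights equals $A$ under $w$ iff it equals $A$ under $w'$, proving that $A$-completeness at $u,v$ is preserved.

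There isn't really a hard obstacle here; the statement is essentially the content of the definition once one notices the local in-arc/out-arc cancellation. The only point that deserves a careful sentence is making sure that, in the ``$z$ is an endpoint'' sub-cases of the path argument, the net shift is a single fixed element of $A$ (independent of the path $P$), which is what allows the set of realized weights to translate bodily. After that, induction on the length of the switching sequence closes both parts of the observation.
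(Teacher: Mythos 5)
Your proof is correct and is exactly the verification the paper has in mind: the paper offers no written proof, stating only that the observation ``follows directly from the definitions,'' and your single-switching reduction with the local in-arc/out-arc cancellation (plus the fixed $\pm c$ shift when the switched vertex is an endpoint of the path) is the standard way to make that claim precise.
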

%
%\begin{proof}
 %Every directed cycle in $\bivec{K}_n$ either does not contain $v$, or uses precisely one arc ending in $v$ and precisely one arc starting at $v$. In either case, the sum of all arc-labels on the cycle is the same before and after the switching.
%\end{proof}
%
The following lemma is our main technical result on the way to the proof of Theorem~\ref{thm:main1}.
\begin{lemma}\label{lemma:main}
Let $n \ge 2$, $(A,+)$ a non-trivial finite Abelian group and $w$ 
an $A$-arc-labelling of $\bivec{K}_n$. Then there exists another $A$-arc-labelling $w'$ which is switching-equivalent to $w$ and at least one of the following holds.
\begin{itemize}
    \item There exists a vertex set $V \subseteq V(\bivec{K}_n)$ of size $|V| \ge n-4|A|$ and a proper subgroup $B < A$ such that %either 
    $w'(x,y)\in B$ for every $(x,y) \in A(\bivec{K}_n[V])$.%; or there exists a proper divisor $q'$ of $q$ such that $w'(x,y)$ is divisible by $q'$ for every arc $(x,y) \in A(\bivec{K}_n[V])$.
    \item There exist vertices $u,v \in V(\bivec{K}_n)$ such that $w'$ is $A$-complete at $u,v$. %for every $a \in A$ %$\mathbb{Z}_q$ 
    %there is a directed path $P_a$ in $\bivec{K}_n$ from $u$ to $v$ 
    %starting in $u$, ending in $v$ and satisfying
    %such that $\sum_{(x,y) \in A(P_a)}{w'(x,y)}=a$.%$\equiv x \mod q$.
\end{itemize}
\end{lemma}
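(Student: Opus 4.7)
The plan is to reinterpret switchings as coboundaries and then to split on whether cycle sums generate the whole group $A$. Any sequence of switchings corresponds to a potential $\phi : V(\bivec K_n) \to A$ giving the new labeling $w'(x,y) = w(x,y) + \phi(x) - \phi(y)$; hence every directed cycle sum is switching-invariant, while the sum along a directed path from $u$ to $v$ changes only by the constant $\phi(u) - \phi(v)$. In particular, both the property ``$A$-complete at $u,v$'' and the subgroup $S \le A$ generated by all directed cycle sums are switching-invariant.

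If $S \ne A$, I would establish the first bullet with $V = V(\bivec K_n)$ and $B = S$. Indeed, the quotient labeling $\bar w : A(\bivec K_n) \to A/S$ has every cycle sum equal to $0$, so $\bar w$ is a coboundary on the complete digraph: there exists $\bar \phi : V \to A/S$ with $\bar w(x,y) = \bar \phi(y) - \bar \phi(x)$, and any lift of $\bar \phi$ to some $\phi : V \to A$ yields, via the corresponding switchings, a labeling $w'$ whose values all lie in $S$.

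So the real task is the case $S = A$, where I aim to produce an $A$-complete pair. I would first switch so that $w(u_0, x) = 0$ for all $x \ne u_0$, by switching each such $x$ individually by $c_x = w(u_0, x)$; these switchings do not interfere since each affects only arcs incident to its own vertex. In this normal form, for any target $v$ the set $\operatorname{Sums}(u_0, v)$ automatically contains $0$ (from the direct arc) and every $w(y, v)$ for $y \ne u_0, v$ (from two-step paths). I would then try to enlarge $\operatorname{Sums}(u_0, v)$ by splicing short detour cycles into current witness paths: inserting a fresh vertex $z$ between an arc $(x,y)$ of a path shifts the path sum by $w(x,z) + w(z,y) - w(x,y)$, which is expressible in terms of the $2$-cycle and $3$-cycle sums through $x,y,z$. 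Since $S = A$, the collection of available cycle sums spans all of $A$, so in principle every still-missing element of $A$ can be realized by an appropriate detour.

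The main obstacle is managing the vertex budget, since each detour consumes a fresh vertex and the procedure may stall before $\operatorname{Sums}(u_0, v)$ fills out all of $A$. The slack $4|A|$ in the first bullet is designed to absorb this cost: if the procedure stalls after using at most $4|A|$ vertices, then on the residual set $V$ of at least $n - 4|A|$ unused vertices all directed cycle sums must lie in some proper subgroup $B < A$, for otherwise a further detour through this residual set would supply a missing increment and contradict stalling. Applying the coboundary-lifting argument of the first case to the labeling restricted to $\bivec K_n[V]$ (with $A$ replaced by $B$) then produces a switching realizing the first bullet. The hard part will be carrying out the greedy enlargement in a way that controls the number of consumed vertices by exactly $4|A|$, and this is where I would expect the quantitative heart of the argument to lie.
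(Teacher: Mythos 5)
Your reduction of switchings to potentials is correct, and your handling of the case where the subgroup $S$ generated by all directed cycle sums is proper is clean and complete: the quotient labelling $\bar w$ has all cycle sums zero, hence is a coboundary, and lifting the potential gives a switching-equivalent labelling with all values in $S$, realizing the first bullet with $V = V(\bivec{K}_n)$. The gap is in the main case $S = A$, which you leave as a sketch. Two steps there are not justified. First, the claim that stalling of the greedy detour procedure forces all directed cycle sums on the residual vertex set into a proper subgroup does not follow from what you wrote: the increment of a detour through a fresh vertex $z$ spliced into an arc $(x,y)$ of a witness path is $w(x,z)+w(z,y)-w(x,y)$, which involves arcs between used and unused vertices rather than cycles inside the residual set, and the increment depends on which arc $(x,y)$ of which witness path you splice into, so ``the sum set $T$ does not grow'' does not translate into ``$T+\delta\subseteq T$ for a fixed family of increments $\delta$'' without further normalization. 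Second, even granting that all achievable increments lie in the stabilizer $\{d: T+d=T\}$ of the current sum set (a proper subgroup since $\emptyset\neq T\neq A$), you still must convert this into the structural conclusion of the first bullet, and you explicitly defer ``the quantitative heart of the argument''.

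The paper resolves exactly this difficulty with two devices your outline is missing. It builds a chain of triangles $x_0,y_0,x_1,y_1,\dots$ in which, after switching so that $w_i(x_{i-1},y)=0$ for every unused vertex $y$, each new link contributes an independent binary detour whose increment $a_{i,i}=w_i(y_{i-1},x_i)$ is the label of a single arc inside the residual set; the achievable sums then form an honest iterated sumset $A_i=\{0,a_{i,1}\}+\cdots+\{0,a_{i,i}\}$ that grows by at least one element per step (so at most $|A|-1$ links, i.e.\ about $2|A|$ consumed vertices) unless $A_{i-1}$ already contains a nontrivial subgroup $B$. In that stalled case the paper does not try to conclude directly: it passes to the quotient group $A/B$ and invokes the lemma for the strictly smaller group (an induction on $|A|$ that your proposal lacks), then either pulls back the proper-subgroup conclusion along $A\to A/B$ or combines $A/B$-completeness with $B\subseteq A_{i-1}$ to upgrade to $A$-completeness. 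This quotient-group induction is the missing idea that makes the stalled case tractable; without it, or an equivalent mechanism, your argument does not close.
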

\begin{proof}
Suppose towards a contradiction that the claim was false, and let %$q \ge 2$ 
$A$ be a group of smallest size for which it fails. In particular, there exists some $n \in \mathbb{N}$ and an $A$-arc-labelling $w$ %:A(\bivec{K}_n) \rightarrow \mathbb{Z}_q$ 
such that the following hold for every $A$-arc-labelling $w'$ that is switching-equivalent to $w$.
\begin{enumerate}[label=(\roman*)]
    \item For every $V \subseteq V(\bivec{K}_n)$ with $|V|\ge n-4|A|$ %there is an arc $(x,y)\in A(\bivec{K}_n[V])$ such that $w'(x,y)$ is not divisible by $q$; 
    and for every proper subgroup $B<A$ %divisor $q'$ of $q$ 
    there exists $(x,y)\in A(\bivec{K}_n[V])$ such that $w'(x,y)\notin B$. % is not divisible by $q'$. 
    \item For every $u,v \in V(\bivec{K}_n)$ there exists some $a \in A$ such that there is no directed path 
    $P_a$ in $\bivec{K}_n$ from $u$ to $v$
    %starting in $u$ and ending in $v$ 
    such that $\sum_{(x,y) \in A(P_a)}{w'(x,y)}=a$. %$\equiv x\mod q$.
\end{enumerate}
In the following we will lead these assumptions towards a contradiction. To do so, we first prove the following auxiliary claim by induction on $i$.
\paragraph{Claim.} For every $0 \le i \le |A|-1$ 
there exist pairwise distinct vertices $x_0, y_0,x_1,y_1\ldots,x_{i-1}, y_{i-1},x_{i}\in V(\bivec{K}_n)$ and an $A$-arc-labelling $w_i$ of $\bivec{K}_n$ that is switching-equivalent to $w$ such that the following hold.
\begin{itemize}
    \item $w_i(x_{j-1},x_j)=0$ for every $1 \le j \le i$.
    \item Let $a_{i,j}:=w_i(x_{j-1},y_{j-1})+w_i(y_{j-1},x_j)$ for $1 \le j \le i$, and 
    \begin{equation*}
         A_i:=\left\{\sum_{j \in J}{a_{i,j}}
         \ |\ J \subseteq \{1,\ldots,i\}\right\} \subseteq A.
    \end{equation*}
Then $|A_i|\geq i+1$. %$|A_i \text{ mod } q| \ge i+1$\footnote{For a set $A\subseteq \mathbb{Z}$ we put $A\text{ mod }q =\{a\text{ mod }q\ |\ a\in A\}\subseteq \mathbb{Z}_q$.}.
\end{itemize}

Before moving on to the proof of the claim, note that the sequence $x_0, y_0,x_1,y_1\ldots,x_{i-1}, y_{i-1},x_{i}$ represents a chain of triangles connecting $x_0$ and $x_i$, see Figure~\ref{fig:chain}. When going from $x_0$ to $x_i$, at every intermediate vertex $x_{j-1}$ we have the choice to either go directly to $x_{j}$ and pick up the label $0$, or to take the detour $x_{j-1}\rightarrow y_{j-1}\rightarrow x_{j}$ and pick up labels summing up to $a_{i,j}$. Therefore, the set $A_i$ represents all possible arc-label sums along possible paths from $x_0$ to $x_i$ in this configuration. 

\begin{figure}
    \centering
    \begin{tikzpicture}[scale=.73,main node/.style={circle,draw,color=black,fill=black,inner sep=0pt,minimum width=4pt}]
        \node[main node] (x0) at (0,0) [label=above:$x_0$]{};
	    \node[main node] (y0) at (2,-2) [label=below:$y_0$]{};
	    \node[main node] (x1) at (4,0) [label=above:$x_1$]{};
	    \node[main node] (y1) at (6,-2) [label=below:$y_1$]{};
	    \node[main node] (x2) at (8,0) [label=above:$x_2$]{};
	    \node[main node] (xi-1) at (16,0) [label=above:$x_{i-1}$]{};
	    \node[main node] (yi-1) at (18,-2) [label=below:$y_{i-1}$]{};
	    \node[main node] (xi) at (20,0) [label=above:$x_i$]{};
	    
	    \draw[->] (x0)--(x1) node[pos=.5,above] {$0$};
	    \draw[->] (x0)--(y0);
	    \draw[->] (y0)--(x1);
	    
	    \draw[->] (x1)--(x2) node[pos=.5,above] {$0$};
	    \draw[->] (x1)--(y1);
	    \draw[->] (y1)--(x2);
	    
	    \draw[->] (xi-1)--(xi) node[pos=.5,above] {$0$};
	    \draw[->] (xi-1)--(yi-1);
	    \draw[->] (yi-1)--(xi);
	    
	    \draw[loosely dotted,thick] (8.5,-1)--(15.5,-1);
	    
	    \def \radius {1.5cm}
	    
	    \draw[->] (0.92,-0.8)
         arc ({-135}:{-45}:\radius);
        \node[] at (2,-0.4) [label=below:$a_{i,1}$]{};
        
        \draw[->] (x1)+(0.92,-0.8)
         arc ({-135}:{-45}:\radius);
        \node[] at (6,-0.4) [label=below:$a_{i,2}$]{};
        
        \draw[->] (xi-1)+(0.92,-0.8)
         arc ({-135}:{-45}:\radius);
        \node[] at (18,-0.4) [label=below:$a_{i,i}$]{};
    \end{tikzpicture}
    \caption{A chain of triangles connecting the vertices $x_0$ and $x_i$.}
    \label{fig:chain}
\end{figure}
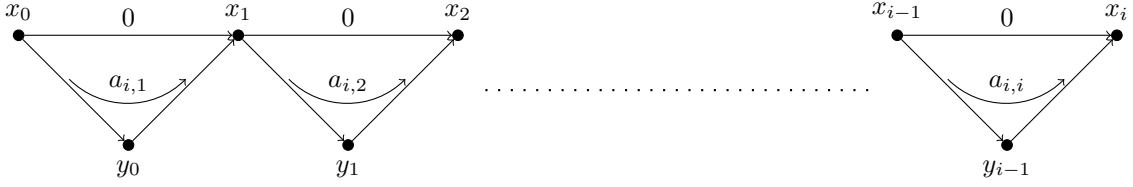

\begin{proof}[Proof of the claim.]
For $i=0$ note that, by definition, $A_i$ always includes $0$ (the empty sum), so the claim holds trivially. Moving on to the inductive step, suppose $1 \le i \le |A|-1$ and the claim holds for $i-1$. Put $V:=V(\bivec{K}_n)\setminus \{x_0, y_0,x_1,y_1\ldots,x_{i-2}, y_{i-2},x_{i-1}\}$. Then $|V|= n-(2i-1) \ge n-2|A|+3$. Let $w_i$ be the $A$-arc-labelling of $\bivec{K}_n$ obtained from $w_{i-1}$ by switching by $w_{i-1}(x_{i-1},y)$ at every vertex $y \in V$. Then $w_{i}$ is clearly switching-equivalent to $w_{i-1}$ and hence to $w$, and by definition satisfies $w_i(x_{j-1},x_j)=w_{i-1}(x_{j-1},x_j)=0$ for $1 \le j \le i-1$ as well as $w_i(x_{i-1},y)=0$ for every $y \in V$. 

\smallskip

We distinguish two cases.

\paragraph{Case 1.} There exists a non-trivial subgroup $\{0\}\neq B\le A$ %divisor $q'$ of $q$ 
such that $B\subseteq A_{i-1}$. %contains the cyclic subgroup of $\mathbb{Z}_q$ generated by $q'$ (as an element of $\mathbb{Z}_q$); or equivalently, for every integer $c$ there is an element $a\in A_{i-1}$ such that $a\equiv cq'\mod q$.

\smallskip

Let $H$ be the quotient group $\bigslant{A}{B}$ and let 
us define an $H$-arc-labelling $\tilde{w}$ of the complete subdigraph $\bivec{K}_n[V]$ by putting $\tilde{w}_i(x,y)=w_i(x,y)+B$ for all $x,y \in V$. We call $\tilde{w}_i$ the \emph{$B$-factor of $w_i$ restricted to $\bivec{K}_n[V]$}. 
By our initial minimality assumption on $|A|$, the claim of the lemma has to hold for $H$, the complete digraph $\bivec{K}_n[V]$ and the $H$-arc-labelling $\tilde{w}_i$. It follows that there exists an $H$-arc-labelling $\tilde{w}_i'$ of $\bivec{K}_n[V]$ switching-equivalent to $\tilde{w}_i$ such that at least one of the following holds.
\begin{itemize}
    \item[(i')] There exists a vertex set $W \subseteq V$ of size $|W| \ge |V|-4|H|$ and a proper subgroup $\tilde{G}<H$ such that %either
    $\tilde{w}_i'(x,y)\in \tilde{G}$ %is divisible by $q'$ 
    for every $(x,y) \in A(\bivec{K}_n[W])$. %; or there exists a proper divisor $q''$ of $q'$ such that $\tilde{w}(x,y)$ is divisible by $q''$ for every arc $(x,y) \in A(\bivec{K}_n[V])$.
    \item[(ii')] There exist vertices $u',v' \in V(\bivec{K}_n)$ such that $\tilde{w}_i'$ is $H$-complete at $u',v'$. %There exist vertices $u',v' \in V$ such that for every $a+B \in H$ there is a directed path 
    %$P_{a+B}$ in $\bivec{K}_n[V]$ from $u'$ to $v'$ 
    %starting in $u$, ending in $v$ and satisfying 
    %such that $\sum_{(x,y) \in A(P_{a+B})}{\tilde{w}_i'(x,y)}=a+B.$%\equiv x'\mod q'$.
%\item There exists a proper divisor $q''$ of $q'$ and a vertex-set $W \subseteq V$ such that $|W| \ge |V|-4q'$ and $\tilde{w}(x,y)$ is divisible by $q''$ for all arcs $(x,y)$ of $\bivec{K}_n[W]$, or $\tilde{w}(x,y)=0$ for all arcs $(x,y)$ of $\bivec{K}_n[W]$.
%\item For every $u' \in V$ there exists a distinct vertex $v' \in V$ such that for every residue $x' \in \mathbb{Z}_{q'}$ there exists a directed $u',v'$-path in $\bivec{K}_n[V]$ whose arc-labels according to $\tilde{w}$ sum up to $x'$. 
\end{itemize}

Since $\tilde{w}_i$ and $\tilde{w}_i'$ are switching-equivalent $H$-arc-labellings on $\bivec{K}_n[V]$, there exists a sequence $S_{c_i+B,v_i}$, $i\in I$ of switchings that transform $\tilde{w}_i$ to $\tilde{w}_i'$. Applying the corresponding sequence $S_{c_i,v_i}$, $i\in I$ of swichings to $w_i$ one obtains an $A$-arc-labelling $w_i'$ of $\bivec{K}_n$ which is switching-equivalent to $w_i$, and hence to $w$, and whose $B$-factor restricted to $\bivec{K}_n[V]$ is $\tilde{w}_i'$. We next consider two cases depending on whether (i') or (ii') occurs.

\paragraph{Case 1 + (i') holds.} As $B$ is a non-trivial subgroup of $A$ %$q'$ is a proper divisor of $q$
, we have $|H|\leq |A|/2$ %$q' \le q/2$ 
and hence $|W| \ge |V|-4|H| \ge |V|-2|A|\ge n-2|A|+3-2|A| \ge n-4|A|$. Let $G$ be the pre-image of $\tilde{G}$ under the canonical group homomorpism $A\rightarrow H=\bigslant{A}{B}$. Then $G$ is a proper subgroup of $A$, and by the assumption on $\tilde{w}_i'$ and the fact that $\tilde{w}_i'$ is the $B$-factor of $w_i'$ restricted to $\bivec{K}_n[V]$, we find that $w_i'(x,y)\in G$ % by the assumptions on $\tilde{w}$, we find that either $\tilde{w_i}(x,y)=\tilde{w}(x,y)$ is divisible by $q'$ 
for every $(x,y) \in A(\bivec{K}_n[W])$. % or $\tilde{w_i}(x,y)=\tilde{w}(x,y)$ is divisible by $q''$ for every arc $(x,y) \in A(\bivec{K}_n[W])$. 
The existence of $W$ and $w_i'$ yields a contradiction to property (i).

\paragraph{Case 1 + (ii') holds.} As $\tilde{w}_i$ and $\tilde{w}_i'$ are switching equivalent, by Observation~\ref{obs:switching} (ii') has to hold for $\tilde{w}_i$ as well. 
%the same statement must be true for the arc-labelling $w^\ast$ wich is switching-equivalent to $\tilde{w}$.Picking $u' \in V$ arbitrarily, we find that there exists $v' \in V$ and for every residue $x \in \mathbb{Z}_{q'}$ a directed $u',v'$-path in $\bivec{K}_n[V]$ whose arc-labels according to $\tilde{w}$ sum up to $x'$ in $\mathbb{Z}_{q'}$.
Then, on the one hand, by property (ii), when applied to the vertices $x_0,v' \in V(\bivec{K}_n)$ and the $A$-arc-labelling $w_i$% of $\bivec{K}_n$ which is switching-equivalent to $w$
, we find that there exists $a \in A$ such that there is no directed $x_0,v'$-dipath $P_a$ in $\bivec{K}_n$ such that $\sum_{(x,y) \in A(P_a)}{w_i(x,y)}=a$. % \equiv x\mod q$. % in $\mathbb{Z}_q$. 
%Let $x':=x \mod q'\ (\in \mathbb{Z}_{q'})$. Note that as $q'$ divides $q$, $x'$ is well defined. 
On the other hand, %if we put $x':=x$ mod $q'\in \mathbb{Z}_q'$ then, 
by the $H$-completeness of $\tilde{w}_i$, there is a $u',v'$-dipath $P_{a+B}$ in $\bivec{K}_n[V]$ for which we have $\sum_{(x,y) \in A(P_{a+B})}{\tilde{w}_i(x,y)}=a+B$. %\equiv x'\mod q'$. %in $\mathbb{Z}_{q'}$ (whose existence is guaranteed as mentioned above). 
Since 
%$w_i(x,y) \mod q'=w^\ast(x,y)$ for all arcs $(x,y) \in \bivec{K}_n[V]$
$\tilde{w}_i$ is the $B$-factor of $w_i$ restricted to $\bivec{K}_n[V]$, it follows that there exists an element $b\in B$ such that %the following equality holds in $\mathbb{Z}_q$.
\begin{equation*}
  \sum_{(x,y) \in A(P_{a+B})}{w_i(x,y)}=a+b.  
\end{equation*}
By our assumption $B\subseteq A_{i-1}$, so we can find $J \subseteq \{1,...,i-1\}$ such that
\begin{equation*}
    \sum_{j \in J}{a_{i-1,j}}= -b. %\equiv -cq' \mod q.
\end{equation*}
Let now $P$ be the dipath in $\bivec{K}_n$ obtained by concatenating the arcs $(x_{j-1},x_j)$ for $j \in \{1,\ldots,i-1\}\setminus J$, the three-vertex-dipaths $(x_{j-1},y_{j-1}), (y_{j-1},x_j)$ for $j \in J$, the arc $(x_{i-1},u')$ and the dipath $P_{a+B}$. Then $P$ starts at $x_0$, ends at $v'$, and satisfies
\begin{equation*}
    \sum_{(x,y) \in A(P)}{w_i(x,y)}=\sum_{j \in J}{a_{i-1,j}}+w_i(x_{i-1},u')+\sum_{(x,y) \in A(P_{a+B})}{w_i(x,y)}=a, %\equiv -cq'+0+cq'+x = x \mod q,
\end{equation*}
which is a contradiction.

\paragraph{Case 2.} There is no non-trivial subgroup $\{0\}\neq B\le A$ such that $B\subseteq A_{i-1}$. %proper divisor $q'$ of $q$ such that $A_{i-1}$ contains $q'B$ for some complete residue system $B$ mod $\frac{q}{q'}$; all elements of $\mathbb{Z}_q$ divisible by $q'$. or equivalently, for every proper divisor $q'$ of $q$ there is an integer $c$ for which there is no element $a\in A_{i-1}$ with $a\equiv cq'\mod q$.%There exists no proper divisor $q'$ of $q$ such that $A_{i-1}$ contains all elements of $\mathbb{Z}_q$ divisible by $q'$. 

\smallskip

By property (i) applied to the set $V$ (which satisfies $|V| \ge n-4|A|$) and to the trivial subgroup $\{0\}<A$ there exist distinct vertices $y_{i-1}, x_i \in V$ such that $a:=w_i(y_{i-1},x_i)\neq 0$. Let us add $y_{i-1},x_i$ to the already constructed sequence $x_0,y_0,x_1,y_1,\ldots,y_{i-2},x_{i-1}$. By the definition of $w_i$, the resulting sequence clearly satisfies the first property we need. For the second property first note that we have $a_{i,j}=w_i(x_{j-1},y_{j-1})+w_i(y_{j-1},x_j)=w_{i-1}(x_{j-1},y_{j-1})+w_{i-1}(y_{j-1},x_j)=a_{i-1,j}$ for all $1 \le j \le i-1$ and $a_{i,i}=w_i(x_{i-1},y_{i-1})+w_i(y_{i-1},x_i)=w_i(y_{i-1},x_i)=a$ which is by assumption non-zero. Recall that $A_i=\{\sum_{j \in J}{a_{i,j}}|J \subseteq \{1,\ldots,i\}\}=A_{i-1}+\{0,a\}$. %, and to conclude, we have to show that $A_i$ contains at least $i+1$ different residues modulo $q$.  
%Now if $A_{i-1}$ already contains all possible residues mod $q$ then we immediately have $|A_i| \ge |A_{i-1}|=q \ge i+1$ as required. Therefore in what follows, we assume that at least one residue modulo $q$ is missing from $A_{i-1}$. %\subsetneq \mathbb{Z}_q$. 
By the inductive assumption on $A_{i-1}$, to obtain $|A_i|\geq i+1$ it suffices to show that $A_i$ is a proper superset of $A_{i-1}$. 

To do so, first note that there has to exist some integer $c\ge 0$ %we use that by assumption of Case 2 and since $A_i \neq \mathbb{Z}_q$ there must exist some integer $c \ge 0$ 
such that $ca \notin A_{i-1}$, as otherwise $A_{i-1}$ would contain the cyclic subgroup of $A$ generated by $a$, and hence we would be in Case 1. So let us fix the smallest such integer $c$ (then $c \ge 1$ since $0a=0 \in A_{i-1}$). %Then $c \ge 1$ since $0 \in A_{i-1}$ and hence $(c-1)\alpha \in A_{i-1}$ by minimality of $c$. 
By the minimality of $c$ we have $(c-1)a\in A_{i-1}$ and hence $ca=(c-1)a+a \in A_{i-1}+\{0,a\}=A_i$, which in turn shows $A_i \setminus A_{i-1} \neq \emptyset$, as required. This proves the assertion of the inductive claim, and concludes the proof of our claim.
\end{proof}

To finish the proof of the lemma, we apply the claim with $i=|A|-1$, and obtain a chain of triangles connecting some vertices $x_0$ and $x_{|A|-1}$ and an $A$-arc-labelling $w_{|A|-1}$ switching equivalent to $w$. Note that in this case we necessarily have $A_{|A|-1}=A$, and hence it follows, that inside this chain of triangles there exists for every $a \in A$ a directed path $P_a$ from $x_0$ to $x_{|A|-1}$ whose arc-labels sum up to $a$, i.e. $w_{|A|-1}$ is $A$-complete at $x_0,x_{|A|-1}$. As $w_{|A|-1}$ is switching-equivalent to $w$,  this contradicts property (ii). This final contradiction finishes the proof of the Lemma~\ref{lemma:main}.
\end{proof}

Using Lemma~\ref{lemma:main} we can now give the proof of Theorem~\ref{thm:main1}.

\begin{proof}[Proof of Theorem~\ref{thm:main1}]
We want to show that $n(A) \le 8|A|$ for every non-trivial Abelian group $(A,+)$.

\smallskip

Suppose towards a contradiction that there exists a finite Abelian group $(A,+)$ such that $n(A)>8|A|$ and suppose that the size of $A$ is the smallest possible. By the definition of $n(A)$ there has to exist a zero-sum free $A$-arc-labelling $w$ of $\bivec{K}_{n(A)}$. %, i.e., which has the property that for every directed cycle $C$ in $\bivec{K}_{n(q)}$ it holds that $\sum_{a \in A(C)}{w(a)} \neq 0$. 
We apply Lemma~\ref{lemma:main} with $n=n(A)$ and the $A$-arc-labelling $w$ to find an $A$-arc-labelling $w'$ of $\bivec{K}_{n(A)}$ that is switching-equivalent to $w$ and one of the following holds.
\begin{enumerate}[label=(\roman*)]
    \item There exists a vertex set $V \subseteq V(\bivec{K}_n)$ such that $|V| \ge n(A)-4|A|>8|A|-4|A|=4|A|$ and a proper subgroup $B<A$ %such that $w'(a)=0$ for every $a \in A(\bivec{K}_n[V])$ or there exists a proper divisor $q'$ of $q$ 
    such that $w'(x,y)\in B$ %is divisible by $q'$ 
    for every $(x,y) \in A(\bivec{K}_n[V])$. 
    \item There exist vertices $u,v\in V(\bivec{K}_n)$ such that $w'$ is $A$-complete at $u,v$. %There exist distinct vertices $u, v \in V(\bivec{K}_n)$ such that for every $x \in \mathbb{Z}_q$ there exists a directed path $P_x$ starting in $u$ and ending in $v$ such that $\sum_{a \in A(P_x)}{w'(a)}=x$. 
\end{enumerate}
Since $w'$ is switching-equivalent to $w$, by Observation~\ref{obs:switching}  $w'$ is also a zero-sum free $A$-arc-labelling of $\bivec{K}_n$. 

\paragraph{Case 1: (i) holds.} %If option (i) as above occurs, then if $w'(a)=0$ for all $a \in A(\bivec{K}_n[V])$, 
If $B=\{0\}$ then any digon in $\bivec{K}_n[V]$ would contradict our assumption that $w'$ is a zero-sum free arc-labelling of $\bivec{K}_n$, and hence we may assume $B\neq \{0\}$. %Hence, the other case must occur, that is, there exists a proper divisior $q'$ of $q$ such that all arc-labels $w'(a)$ with $a \in A(\bivec{K}_n[V])$ are divisible by $q'$. Let $q^\ast:=q/q'$, which is also a proper divisor of $q$. Then we may define a $\mathbb{Z}_{q^\ast}$-arc-labelling of $\bivec{K}_n[V]$ by putting $w^\ast(a):=w'(a)/q'$ for all $a \in A(\bivec{K}_n[V])$. 
By the minimality of $|A|$, we must then have $n(B)\leq 8|B|\leq 4|A|<|V|$, where we used that $B$ is a proper subgroup of $A$, and hence $|B|\le|A|/2$. %assumption on $q$ and since $q^\ast \le q/2$ we have $n(q^\ast) \le 8q^\ast \le 4q<|V|$. 
Now, by the definition of $n(B)$, $w'$ cannot be a zero-sum-free $B$-labelling of $\bivec{K}_n[V]$, i.e. there has to exist a directed cycle $C$ in $\bivec{K}_n[V]$ whose arc-labels sum up to zero in $B$, and hence in $A$. %The definition of $w^\ast$ now immediately yields that also the arc-labels of $C$ according to the $\mathbb{Z}_q$-labelling $w'$ of $\bivec{K}_n$ sum up to zero, 
This contradicts the assumption that $w'$ is a zero-sum free $A$-arc-labelling of $\bivec{K}_n$.

\paragraph{Case 2: (ii) holds.} Let $a:=-w'(v,u) \in A$. As $w'$ is $A$-complete at $u,v$ there exists a directed path $P_a$ in $\bivec{K}_n$ from $u$ to $v$ such that $\sum_{(x,y) \in A(P_a)}{w'(x,y)}=a$. Let $C$ be the directed cycle obtained from $P_a$ by adding the arc $(v,u)$. Then we have
\begin{equation*}
 \sum_{(x,y) \in A(C)}{w'(x,y)}=w'(v,u)+\sum_{(x,y) \in A(P_a)}{w'(x,y)}=-a+a=0,   
\end{equation*}
which again contradicts the assumption that $w'$ is a zero-sum free $A$-arc-labelling of $\bivec{K}_n$.

\smallskip

This finishes the proof of Theorem~\ref{thm:main1}.
%As we have arrived at a contradiction in both cases, we find that our initial assumption concerning the existence of $q$ must have been wrong, indeed we have $n(q) \le 8q$ for all integers $q \ge 2$.
%\smallskip
%In order to prove the lower bound $n(A)>|A|$, it suffices to construct a zero-sum free $A$-arc-labelling of $\bivec{K}_{|A|}$. For this, ...
%fix some linear ordering $v_1, \ldots, v_{|A|}$ of the vertices of $\bivec{K}_{|A|}$ and define the label of an arc $(v_i,v_j)$ to be $1$ if $i<j$ and $0$ otherwise. Then every directed cycle in $\bivec{K}_q$ uses at least one and at most $q-1$ arcs of the form $(v_i,v_j)$ with $i<j$, and hence the sum of its arc-labels is non-zero in $\mathbb{Z}_q$. Hence this is a zero-sum free arc-labelling, as required, and we may conclude the proof.
\end{proof}

We now continue with groups of prime order and prove Theorem~\ref{thm:main2}. It will be convenient to first prove the following auxiliary result.

\begin{lemma}\label{lemma:aux}
Let $p \ge 3$ be a prime number and $w$ a zero-sum free $\mathbb{Z}_p$-arc-labelling of $\bivec{K}_3$. Then there exists a vertex $v \in V(\bivec{K}_n)$ and non-trivial directed paths $P_1, P_2$ in $\bivec{K}_3$ %of positive length 
ending at $v$ such that the three values $0$, $\sum_{(x,y) \in A(P_1)}{w(x,y)}$, $\sum_{(x,y) \in A(P_2)}{w(x,y)}$ are pairwise distinct. 
\end{lemma}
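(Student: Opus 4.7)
My plan is to prove the lemma by contradiction after first using Observation~\ref{obs:switching} to place $w$ in a normal form where the four nontrivial path sums ending at one vertex coincide with the four cycle sums through that vertex. Concretely, I would switch at $v_2$ by $c = w(v_1,v_2)$ and then at $v_3$ by $c = w(v_1,v_3)$, obtaining a zero-sum-free, switching-equivalent arc-labelling---which I continue to denote by $w$---satisfying $w(v_1,v_2) = w(v_1,v_3) = 0$. (Note that switching at $v_2$ only affects arcs incident to $v_2$, and in particular leaves $w(v_1,v_3)$ unchanged, so the second switching is independent of the first.)

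Setting $\gamma = w(v_2,v_1)$, $\epsilon = w(v_3,v_1)$, $\delta = w(v_2,v_3)$, and $\zeta = w(v_3,v_2)$, the four nontrivial directed paths ending at $v_1$---namely $v_2 \to v_1$, $v_3 \to v_1$, $v_2 \to v_3 \to v_1$, and $v_3 \to v_2 \to v_1$---have sums $\gamma$, $\epsilon$, $\delta + \epsilon$, and $\zeta + \gamma$ respectively. Because the two arcs out of $v_1$ are labelled $0$, these four sums are precisely the sums of the four directed cycles through $v_1$ (the two digons and the two triangles); zero-sum-freeness therefore makes all four of them nonzero.

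The key step is then to observe that these four sums cannot all coincide. If $\gamma = \epsilon = \delta + \epsilon = \zeta + \gamma = t$ for a common value $t$, then $\delta = t - \epsilon = 0$ and $\zeta = t - \gamma = 0$, whence the digon $v_2 \leftrightarrow v_3$ has total sum $\delta + \zeta = 0$, contradicting zero-sum-freeness. Hence at least two of the four path sums are distinct, and being all nonzero they yield two nontrivial paths $P_1, P_2$ ending at $v = v_1$ with $0$, $w(P_1)$, $w(P_2)$ pairwise distinct, as required. The main obstacle I anticipate is the careful bookkeeping of the two switching operations (including the verification via Observation~\ref{obs:switching} that zero-sum-freeness is preserved throughout); once the normal form is reached, the final contradiction is essentially a one-line computation.
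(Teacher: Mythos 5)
Your normalization step is not a valid ``without loss of generality'' reduction: the conclusion of Lemma~\ref{lemma:aux} is not invariant under switching, and Observation~\ref{obs:switching} only guarantees that switching preserves zero-sum-freeness and $A$-completeness, not the existence of two paths into a common vertex with distinct nonzero sums. The problem is that under $S_{c,v_2}$ a path ending at $v_1$ and \emph{starting} at $v_2$ changes its sum by $+c$, while a path ending at $v_1$ and starting at $v_3$ (with $v_2$ internal or absent) does not; your two switchings therefore shift the four path sums at $v_1$ by different amounts, so two paths witnessing the conclusion for the normalized labelling need not witness it for the original $w$. An explicit instance with $p=5$: take $w(v_1,v_2)=w(v_1,v_3)=w(v_3,v_1)=w(v_3,v_2)=1$ and $w(v_2,v_1)=w(v_2,v_3)=0$. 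The five directed cycles of $\bivec{K}_3$ have sums $1,2,1,2,2$, so $w$ is zero-sum-free, yet the four nontrivial path sums into $v_1$ are $0,1,1,1$, so no valid pair $P_1,P_2$ ends at $v_1$. After your two switchings (each by $c=1$) the path sums into $v_1$ become $1,2,2,2$, and your argument outputs $P_1=(v_2,v_1)$ together with one of the other three paths --- but in the original $w$ the path $(v_2,v_1)$ has sum $0$, so the three values $0,\sum_{P_1}w,\sum_{P_2}w$ are never pairwise distinct. (For this $w$ the lemma in fact holds at $v=v_2$, with sums $1$ and $2$.)

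What your computation does establish is the switching-invariant fact that the four directed cycles through $v_1$ (two digons, two triangles) all have nonzero sums and cannot all be equal; that part, including the ``$\delta=\zeta=0$ forces a zero-sum digon'' step, is correct and is a clean observation. But it is strictly weaker than the lemma: converting two cycles through $v_1$ with distinct sums into two \emph{paths} with distinct nonzero sums ending at a common vertex of the \emph{original} labelling is exactly where the content lies, and the paper does this by a direct case analysis on the labels of $w$ itself, after only reordering the vertices so that $w(v_i,v_j)\neq 0$ for $i<j$ (no switching). To repair your proof you would either have to redo that case analysis for the unnormalized labelling, or reformulate the lemma up to switching equivalence and then re-examine its use in the proof of Theorem~\ref{thm:main2}.
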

\begin{proof}
Since $w$ is a zero-sum free arc-labelling, there exists, in particular, no directed cycle %in $\bivec{K}_3$
with all arc labels zero. It follows that we can order the vertices of $\bivec{K}_3$ as $v_1, v_2, v_3$ such that $w(v_i,v_j) \neq 0$ for $1 \le i<j \le 3$. Let $a:=w(v_2,v_3)$. If $w(v_1, v_2)+a \notin \{0,a\}$, then the claim of the lemma is satisfied with $v=v_3$ and the paths $P_1=(v_2,v_3)$, $P_2=(v_1,v_2), (v_2, v_3)$. Hence, moving on, we may assume that $w(v_1,v_2)+a \in \{0,a\}$. As $w(v_1, v_2) \neq 0$, it necessarily follows that $w(v_1,v_2)+a=0$, i.e. $w(v_1, v_2)=-a$. If $w(v_1,v_3) \neq a$ then the claim of the lemma is satisfied with $v=v_3$ and the paths $P_1=(v_1,v_3)$, $P_2=(v_2,v_3)$. Thus, in what follows we may assume that $w(v_1,v_3)=a$. Next, note that $w(v_3,v_2) \neq -a$, as otherwise the digon spanned by the vertices $v_2, v_3$ would form a directed cycle whose arc labels %$a, -a$ 
sum up to zero. Therefore, if $w(v_3,v_2) \neq 0$ then the statement of the lemma holds with $v=v_2$ and the paths $P_1=(v_1,v_2)$, $P_2=(v_3,v_2)$. % Indeed, we have $w(v_1,v_2)=-a \neq 0$ and $w(v_3,v_2) \neq 0$ (by assumption) as well as  
As a result, we may also assume that $w(v_3,v_2)=0$.
Now, however, it is the vertex $v=v_2$ and the paths $P_1=(v_1,v_3), (v_3,v_2)$, $P_2=(v_1,v_2)$ that fulfill the desired property.%, since we have $w(v_1,v_3)+w(v_3,v_2)=a+0=a \neq 0$, $w(v_1,v_2)=-a \neq 0$ and $a \neq -a$ as we have assumed $p \neq 2$. 

%Let us now suppose that $w(v_3,v_1) \neq a$. Then the claim of the Lemma holds with $v=v_2$ and the paths $P_1=(v_1,v_2)$, $P_2=(v_3,v_1), (v_1, v_2)$, since we have $w(v_1,v_2)=-a \neq 0$, $w(v_3,v_1)+w(v_1,v_2)=w(v_3,v_1)-a \neq 0$ as well as $w(v_1, v_2) \neq w(v_3,v_1)+w(v_1,v_2)$, for otherwise $w(v_3,v_1)=0$, and hence $w(v_1,v_2)+w(v_2,v_3)+w(v_3,v_1)=-a+a+0=0$, contradicting our assumption that $w$ is a zero-sum free arc-labelling. Moving on, we may therefore assume that $w(v_3,v_1)=a$. Now, we can satify the claim of the Lemma by putting $g$
\end{proof}

We are now prepared for the proof of Theorem~\ref{thm:main2}.

\begin{proof}[Proof of Theorem~\ref{thm:main2}]
Suppose towards a contradiction that there exists a prime number $p \ge 3$ such that $n(\mathbb{Z}_p)>\frac{3p-1}{2}$.
Then for $n=\frac{3p-1}{2}$ there must exist a zero-sum free $\mathbb{Z}_p$-arc-labelling $w$ of $\bivec{K}_n$. In order to lead this assumption towards a contradiction, let us first prove the following claim by induction on $i$.

\paragraph{Claim.} For $i=0,1,2,\ldots,\frac{p-1}{2}$ there are distinct vertices $x_0,y_0,z_0,x_1,y_1,z_1,\ldots, x_{i-1}, y_{i-1},z_{i-1}, x_{i}$ in $\bivec{K}_n$ and a $\mathbb{Z}_p$-arc-labelling $w_i$ of $\bivec{K}_n$ switching-equivalent to $w$ such that the following hold for every $1\leq j \leq i$.
\begin{itemize}
\item $w_i(x_{j-1},x_j)=0$. 
\item There exist two directed paths $P_{1,j}, P_{2,j}$ in $\bivec{K}_n[V_j]$, $V_j:=\{x_{j-1},y_{j-1},z_{j-1},x_j\}$ starting at $x_{j-1}$ and ending in $x_j$ such that the three values $0$, $a_{i,j}^{(1)}=\sum_{(x,y) \in A(P_{1,j})}{w_i(x,y)}$ and $a_{i,j}^{(2)}=\sum_{(x,y) \in A(P_{2,j})}{w_i(x,y)}$ are pairwise different.
\end{itemize}

Before moving on to the proof of the claim, note that the setting is analogous to the one in the proof of Theorem~\ref{thm:main1}. However, instead of a chain of triangles we have a chain of $\bivec{K}_4$'s   connecting $x_0$ and $x_i$. When going from $x_0$ to $x_i$, at every intermediate vertex $x_{j-1}$ we have the choice to either go directly to $x_{j}$ and pick up the label $0$, or to take the detour along $P_{1,j}$ or $P_{2,j}$ and pick up labels summing up to $a_{i,j}^{(1)}$ or $a_{i,j}^{(2)}$, respectively. If we put
\begin{equation*}
    A_i:=\{0,a_{i,1}^{(1)},a_{i,1}^{(2)}\}+\{0,a_{i,2}^{(1)},a_{i,2}^{(2)}\}+\cdots+\{0,a_{i,i}^{(1)},a_{i,i}^{(2)}\} \subseteq \mathbb{Z}_p,  
\end{equation*}
then $A_i$ represents  all  possible arc-label sums along possible paths from $x_0$ to $x_i$ in this configuration. 

\begin{proof}[Proof of the claim.]
The claim holds for $i=0$ by simply selecting an arbitrary vertex $x_0 \in V(\bivec{K}_n)$ and putting $w_0:=w$. 
Now let $1 \le i \leq \frac{p-1}{2}$ and suppose the claim holds for $i-1$. 

Put $W=\{x_0,y_0,z_0,x_1,y_1,z_1,\ldots, x_{i-1}, y_{i-2},z_{i-2}, x_{i-1}\}$,  $V=V(\bivec{K}_n)\backslash W$ %be the pairwise distinct vertices given by the Claim for $i-1$, let $w_{i-1}$ be the corresponding arc-labelling of $\bivec{K}_n$ which is switching-equivalent to $w$, and for every $j \in \{1,\ldots,i-1\}$ let $P_{1,j}$ and $P_{2,j}$ be distinct directed $x_{j-1},x_j$-paths in $\bivec{K}_n[x_{j-1},y_{j-1},z_{j-1},x_j]$ such that $0 \neq \sum_{a \in A(P_{1,j})}{w_{i-1}(a)} \neq \sum_{a \in A(P_{2,j})}{w_{i-1}(a)} \neq 0$. 
and let $w_i$ be the $\mathbb{Z}_p$-arc-labelling of $\bivec{K}_n$ defined by switching by $w_{i-1}(x_{i-1},y)$ at every $y\in V$. 
Then $w_i$ is clearly switching-equivalent to $w_{i-1}$, and hence to $w$, by definition agrees with $w_{i-1}$ on all arcs spanned by $W$, and $w_i(x_{i-1},y)=0$ for every $y\in V$ . In particular, for $1 \le j \le i-1$ we have $w_i(x_{j-1},x_j)=0$ and  the values $0$, $a_{i,j}^{(1)}=a_{i-1,j}^{(1)}$ and $a_{i,j}^{(2)}=a_{i-1,j}^{(2)}$ are pairwise different.  %$0 \neq \sum_{a \in A(P_{1,j})}{w_{i}(a)} \neq \sum_{a \in A(P_{2,j})}{w_{i}(a)} \neq 0$ . 
Let us now pick three distinct vertices $V_3=\{v_1, v_2, v_3\}\subseteq V$ % of $\bivec{K}_n$ outside $\{x_0,\ldots,x_{i-1},y_0,\ldots,y_{i-2}, z_0, \ldots, z_{i-2}\}$ 
arbitrarily. Since $w$ is a zero-sum free arc-labelling and $w_i$ is switching-equivalent to $w$, by Observation~\ref{obs:switching} $w_i$ is also zero-sum free. In particular, the restriction of $w_i$ to $A(\bivec{K}_n[V_3])$ is also a zero-sum free $\mathbb{Z}_p$-arc-labelling. By the application of Lemma~\ref{lemma:aux} to $\bivec{K}_n[V_3]$ we find that there is a vertex $x_i \in V_3$ and two distinct directed paths $P_1$ and $P_2$ in $\bivec{K}_n[V_3]$ of positive length ending at $x_i$ such that $0$, $b_{i,j}^{1}=\sum_{(x,y) \in A(P_1)}{w_i(x,y)}$ and  $b_{i,j}^{1}=\sum_{(x,y) \in A(P_2)}{w_i(x,y)}$ are pairwise distinct. Let us denote by $u_1$ and $u_2$ the starting vertices of $P_1$ and $P_2$, respectively, and put $\{y_{i-1},z_{i-1}\}=V_3\setminus \{x_i\}$. Furthermore define $P_{1,i}$ and $P_{2,i}$ as the directed $x_{i-1},x_i$-paths in $\bivec{K}_n[V_i]$, $V_i=\{x_{i-1},y_{i-1},z_{i-1},x_i\}$ obtained by extending $P_1$ and $P_2$ with the arcs $(x_{i-1},u_1)$ and $(x_{i-1},u_{2})$, respectively. Then we have $w_i(x_{i-1},x_i)=0$, and the three values $0$, $a_{i,j}^{(1)}=w_i(x_{i-1},u_1)+b_{i,j}^{(1)}=0+b_{i,j}^{(1)}=b_{i,j}^{(1)}$ and $a_{i,j}^{(2)}=w_i(x_{i-1},u_2)+b_{i,j}^{(2)}=0+b_{i,j}^{(2)}=b_{i,j}^{(2)}$ %well as 
%$$0 \neq \sum_{a \in A(P_{1,i})}{w_i(a)}=\sum_{a \in A(P_{1})}{w_i(a)} \neq \sum_{a \in A(P_{2})}{w_i(a)} =\sum_{a \in A(P_{2,i})}{w_i(a)} \neq 0,$$
%where we used that $w_i(x_{i-1},y)=0$ for all $y \in \{x_{i-1},y_{i-1},z_{i-1},x_i\}$ by definition of $w_i$. 
are pairwise different, as required.
%This shows that $w_i$, the sequence $x_0,\ldots,x_i, y_0, \ldots,y_{i-1}, z_0, \ldots, z_{i-1}$ and the paths $P_{1,j}, P_{2,j}, j=0,\ldots,i$ satisfy the properties required for the inductive Claim for $i$. This concludes the proof.
\end{proof}

To conclude the proof of Theorem~\ref{thm:main2} let %$x_0,\ldots,x_{\frac{p-1}{2}}, y_0,\ldots,y_{\frac{p-3}{2}}, z_0, \ldots, z_{\frac{p-3}{2}}$, $P_{1,j}, P_{2,j}, j=1, \ldots, \frac{p-1}{2}$ and $w_{\frac{p-1}{2}}$ 
$x_0,y_0,z_0,x_1,y_1,z_1,\ldots, x_{\frac{p-3}{2}}, y_{\frac{p-3}{2}},z_{\frac{p-3}{2}}, x_{\frac{p-1}{2}}$ and $w_{\frac{p-1}{2}}$ be as given by the above claim when applied with $i=\frac{p-1}{2}$. Let $A_{\frac{p-1}{2}}$ be the set of corresponding path sums, and put $a:=-w_{\frac{p-1}{2}}(x_{\frac{p-1}{2}},x_0)$. %Let us define $a_{1,j}:=\sum_{a \in A(P_{1,j})}{w(a)}, a_{2,j}:=\sum_{a \in A(P_{2,j})}{w(a)}, j=1, \ldots, \frac{p-1}{2}$. 
%It follows directly by the properties of the paths $P_{1,j}, P_{2,j}$ that for every residue $x \in A$ there exists a directed $x_0,x_{\frac{p-1}{2}}$-path in $\bivec{K}_n$ whose arc-labels sum up to $x$.
Since $p$ is a prime number, the iterative application of the Cauchy-Davenport Theorem directly yields that $A_{\frac{p-1}{2}}=\mathbb{Z}_p$ must hold, and hence our chain of $\bivec{K}_4$'s, in particular, contains a path $P_{a}$ from $x_0$ to $x_{\frac{p-1}{2}}$ %In particular, we have $-w_{\frac{p-1}{2}}(x_{\frac{p-1}{2}},x_0) \in A$. Let $P$ be a directed $x_0, x_{\frac{p-1}{2}}$-path in $\bivec{K}_n$ 
such that $\sum_{(x,y) \in A(P_{a})}{w_{\frac{p-1}{2}}(x,y)}=a$. Let $C$ be the directed cycle obtained from $P_a$ by adding the arc $(x_{\frac{p-1}{2}},x_0)$. % It is now clear that the arc-labels on the directed cycle $C$ obtained from $P$ by adding the arc $(x_{\frac{p-1}{2}},x_0)$ sum up to $0$, contradicting that $w_{\frac{p-1}{2}}$ is a zero-sum free $\mathbb{Z}_p$-arc-labelling of $\bivec{K}_n$. 
Then we have 
\begin{equation*}
    \sum_{(x,y)\in A(C)}w_{\frac{p-1}{2}}(x,y)=w_{\frac{p-1}{2}}(x_{\frac{p-1}{2}},x_0) + \sum_{(x,y) \in A(P_{a})}{w_{\frac{p-1}{2}}(x,y)}=-a+a=0.
\end{equation*}
However, this is impossible as $w_{\frac{p-1}{2}}$ is switching equivalent to a zero-sum-free arc-labeling, and as such is zero-sum-free as well.
This final contradiction shows that our very initial assumption that $n(\mathbb{Z}_p)>\frac{3p-1}{2}$ was wrong and hence concludes the proof.
\end{proof}

Finally we give the proof of Corollary~\ref{cor:easy}. Even though the argument is identical to the one in in~\cite{AK20}, we include it for the reader's convenience.

\begin{proof}[Proof of Corollary~\ref{cor:easy}]
Let $q \ge 2$ and $G$ be a $K_{2n(\mathbb{Z}_q)}$-minor. Label the $2n(\mathbb{Z}_q)$ super nodes of this minor as $X_i^+, X_i^-$ for $1 \le i \le n(\mathbb{Z}_q)$. By definition, for every $i$ there is a unique edge $x_i^+x_i^-$ in $G$ connecting a vertex in $X_i^+$ to a vertex in $X_i^-$, and for every $i \neq j$ the induced subgraph $G[X_i^+ \cup X_j^-]$ is a tree. Let us define $w(i,j) \in \mathbb{Z}_q$ to be one plus the length of the unique path connecting $x_i^+$ and $x_j^-$ in this tree taken modulo $q$. This results in a $\mathbb{Z}_q$-arc-labelling of the complete digraph $\bivec{K}_{n(\mathbb{Z}_q)}$ on the vertex-set $\{1,\ldots,n(\mathbb{Z}_q)\}$. Then, by the definition of the function $n(\mathbb{Z}_q)$ %By Theorem~\ref{thm:main1}, applied to $A=\mathbb{Z}_q$, 
there has to exist a directed cycle in this auxiliary complete digraph whose arc-labels sum up to zero. Expanding this cycle into a cycle in $G$ by replacing arcs in $\bivec{K}_{n(\mathbb{Z}_q)}$ with the connecting corresponding paths in $G$ yields a cycle of length divisible by $q$ in $G$, as desired.
\end{proof}

\section{Concluding remarks}

In Theorem~\ref{thm:main1} we proved an upper bound on the function $n(A)$ for general finite Abelian groups $A$, which shows that this function grows at most linearly with $|A|$. As demonstrated by Theorem~\ref{thm:main2}, this upper bound can be improved at least when $A=\mathbb{Z}_p$ for some prime $p\geq 3$, but we believe that improvement should be possible in general. To support this belief %in an improved upper bound on $n(A)$ as claimed by Conjecture~\ref{ourcon}, 
we first remark that, by slightly adjusting the proofs of Lemma~\ref{lemma:main} and %the upper bound in
Theorem~\ref{thm:main1}, %and the requirement on the size of $V$ in the statement of , 
one can obtain the following result.

\begin{thm}\label{thm:main3}
Let $(A,+)$ be a non-trivial finite Abelian group and let $p$ be the smallest prime divisor of $|A|$. Then we have $n(A)\leq \frac{2p^2}{(p-1)^2}|A|$. 
\end{thm}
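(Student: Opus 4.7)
The plan is to mimic the proofs of Lemma~\ref{lemma:main} and Theorem~\ref{thm:main1}, carefully tracking the dependence on $p$, the smallest prime divisor of $|A|$. The key observation is that every nontrivial subgroup $B$ of an Abelian group $A$ has order divisible by a prime divisor of $|A|$, so $|B|\ge p$, and consequently the quotient $H=A/B$ satisfies $|H|\le |A|/p$ rather than merely $|A|/2$. Because this extra slack is available at every level of both inductions, one can shave the constants down to $p$-dependent values that collapse to the original ones when $p=2$.

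First I would prove a sharpened version of Lemma~\ref{lemma:main} in which the bound $|V|\ge n-4|A|$ is replaced by $|V|\ge n-\alpha|A|$, with $\alpha := \tfrac{2p}{p-1}$. The proof proceeds by induction on $|A|$ and mirrors the original argument exactly. The chain-of-triangles construction still consumes at most $2|A|-3$ vertices, so the auxiliary set $V$ satisfies $|V|\ge n-2|A|+3$. In Case~1, the inductive application of the sharpened lemma to the $B$-factor on $\bivec{K}_n[V]$ loses at most $\alpha_H|H|$ further vertices, where $\alpha_H = \tfrac{2p_H}{p_H-1}$ and $p_H$ is the smallest prime divisor of $|H|$. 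Since $p_H\ge p$ and $q\mapsto\tfrac{2q}{q-1}$ is decreasing, $\alpha_H\le\alpha$; combined with $|H|\le|A|/p$ the recursion yields
\begin{equation*}
    |W|\ge n - (2|A|-3) - \alpha|A|/p,
\end{equation*}
and the identity $\alpha = 2+\alpha/p$ is precisely what makes the inductive step close with $|W|\ge n-\alpha|A|$. Case~2 and Case~1+(ii') are unchanged, since they only require $|V|\ge n-\alpha|A|$, which follows from $\alpha\ge 2$.

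Armed with the sharpened lemma, Theorem~\ref{thm:main3} is obtained by repeating the induction of Theorem~\ref{thm:main1} with $8|A|$ replaced by $c(p)|A|$, where $c(p) := \tfrac{2p^2}{(p-1)^2}$. Suppose for contradiction that $|A|$ is smallest of its kind violating $n(A)\le c(p)|A|$. Applying the sharpened lemma to a zero-sum-free arc-labelling $w$ of $\bivec{K}_{n(A)}$ produces a switching-equivalent $w'$ satisfying (i) or (ii). Case~(ii) is handled exactly as before by closing a complete $u,v$-path with the reverse arc. In Case~(i), the arithmetic
\begin{equation*}
    |V|>(c(p)-\alpha)|A| = \tfrac{2p}{(p-1)^2}|A| = c(p)|A|/p
\end{equation*}
shows that $V$ is large enough to contain a zero-sum cycle: the subcase $B=\{0\}$ is killed immediately by any digon, and otherwise $|B|\le|A|/p$ with smallest prime divisor $p_B\ge p$, so minimality of $|A|$ together with monotonicity of $c$ gives $n(B)\le c(p_B)|B|\le c(p)|A|/p<|V|$, forcing a zero-sum cycle inside $\bivec{K}_n[V]$ and contradicting the zero-sum-freeness of $w'$.

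The main obstacle is not technical but numerical: pinning down the correct fixed points. The value $\alpha=2p/(p-1)$ is chosen as the unique fixed point of $\alpha=2+\alpha/p$ balancing the two sources of vertex loss inside the sharpened lemma, and $c(p)=2p^2/(p-1)^2$ is chosen so that $c(p)-\alpha=c(p)/p$, thereby balancing the lemma's loss $\alpha|A|$ against the recursive bound $c(p)|A|/p$ in the theorem. Once these fixed points are identified, every inequality in the original proofs of Lemma~\ref{lemma:main} and Theorem~\ref{thm:main1} carries over mechanically with $4$ and $8$ replaced by $\alpha$ and $c(p)$ respectively.
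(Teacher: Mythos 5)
Your proposal is correct and is exactly the adjustment the paper has in mind: the paper states Theorem~\ref{thm:main3} without proof, saying only that it follows by ``slightly adjusting'' the proofs of Lemma~\ref{lemma:main} and Theorem~\ref{thm:main1}, and you have supplied that adjustment with the right constants. The key facts you use --- that any non-trivial subgroup and any proper quotient of $A$ has order at least $p$ and at most $|A|/p$ respectively, that $\alpha(p)=\tfrac{2p}{p-1}$ and $c(p)=\tfrac{2p^2}{(p-1)^2}$ are decreasing so the inductive applications to $H$ and $B$ only improve, and the fixed-point identities $\alpha=2+\alpha/p$ and $c(p)-\alpha=c(p)/p$ --- all check out and recover $4$ and $8$ at $p=2$.
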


The statement of Theorem~\ref{thm:main1} can be recovered by noting that we always have $p\geq2$. 

\smallskip

Next, let us consider Theorem~\ref{thm:main2}. Its proof is based on Lemma~\ref{lemma:aux}, which allowed us to build a long chain of $\bivec{K}_4$'s and hence to prove the $\mathbb{Z}_p$-completeness of the given arc-labelling. An improvement on this auxiliary result would directly improve the upper bound on the function $n(A)$ for groups of prime order. In this direction we propose the following conjecture.

\begin{conj}
Let $t\geq 3$ an integer, $p\geq 3$ a prime number, and $w$ a zero-sum-free $\mathbb{Z}_p$-arc-labelling of $\bivec{K}_t$. Then there exist a vertex $v\in V(\bivec{K}_t)$ and non-trivial directed paths $P_1,P_2,\dots,P_{t-1}$ in $\bivec{K}_t$ ending at $v$ such that the $t-1$ values $\sum_{(x,y)\in A(P_i)}w(x,y)$, $1\leq i \leq t-1$ are all non-zero and pairwise distinct.
\end{conj}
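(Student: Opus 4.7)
My plan is to prove the conjecture by strong induction on $t$, with Lemma~\ref{lemma:aux} serving as the base case $t=3$. For the inductive step I would pick an arbitrary vertex $u\in V(\bivec{K}_t)$ and apply the inductive hypothesis to the restriction of $w$ to $\bivec{K}_t\setminus u\cong\bivec{K}_{t-1}$, which is still zero-sum-free since every cycle in it is a cycle in $\bivec{K}_t$. This yields a vertex $v\neq u$ and non-trivial directed paths $Q_1,\dots,Q_{t-2}$ in $\bivec{K}_t\setminus u$ ending at $v$ with pairwise distinct non-zero sums $s_1,\dots,s_{t-2}\in\mathbb{Z}_p$. Setting $T:=\{0,s_1,\dots,s_{t-2}\}\subseteq\mathbb{Z}_p$, a set of size $t-1$, the task reduces to exhibiting one more non-trivial directed path ending at $v$ whose sum lies in $\mathbb{Z}_p\setminus T$. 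Such a path must necessarily use $u$, for otherwise it would already be witnessed within $\bivec{K}_t\setminus u$.

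To produce such a path I would normalize the labelling at $v$ via switching. Switching at each $x\neq v$ by $-w(x,v)$ produces a switching-equivalent and still zero-sum-free labelling $w'$ (by Observation~\ref{obs:switching}) satisfying $w'(x,v)=0$ for every $x\neq v$. Under $w'$, the sum of any non-trivial path ending at $v$ equals the sum of its prefix inside $\bivec{K}_t\setminus v$. Letting $R\subseteq\mathbb{Z}_p$ be the set of $w'$-sums of non-trivial paths ending at $v$ that use $u$, one has $0\in R$ (coming from the arc $u\to v$), and otherwise $R$ consists of the $w'$-sums of non-trivial paths in $\bivec{K}_t\setminus v$ starting at $u$. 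A dual form of the inductive hypothesis applied to $\bivec{K}_t\setminus v$ (via arc-reversal and sign change, which preserves zero-sum-freeness and swaps the roles of source and target) would produce $t-2$ pairwise distinct non-zero sums of paths starting at some vertex $\alpha$ in that sub-digraph. If $\alpha=u$ we get $|R|\geq t-1$ immediately; if $\alpha\neq u$, paths starting at $\alpha$ that avoid $u$ can be prepended with the arc $(u,\alpha)$ to obtain paths starting at $u$, but this only controls the $\bivec{K}_{t-2}$ on $V\setminus\{u,v\}$ and falls one sum short of what is needed.

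The main obstacle is closing this gap and, more broadly, handling the rigid scenario in which $R\subseteq T$ (or $R=T$ exactly). In this pathological case the labelling is doubly constrained: the non-zero $w'$-sums of non-trivial paths starting at $u$ in $\bivec{K}_t\setminus v$ must coincide exactly with $s_1,\dots,s_{t-2}$, which were themselves the sums of paths ending at $v$ in $\bivec{K}_t\setminus u$. I would try to turn this coincidence into a contradiction using zero-sum-freeness --- for instance, the arc $(v,u)$ closes any $u$-to-$v$ path of sum $-w'(v,u)$ into a zero-sum cycle, forcing $-w'(v,u)\notin R$ --- in combination with the freedom to vary the removed vertex $u$ among all $t$ possibilities and to choose alternative valid outputs of the inductive hypothesis, supplemented perhaps by a Cauchy--Davenport style sumset inequality. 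This rigid case nevertheless seems the most delicate part of the argument: a case analysis in the spirit of (but considerably more intricate than) the three-vertex analysis in Lemma~\ref{lemma:aux} will likely be required, and it is quite possible that a genuinely new structural insight beyond plain induction is needed --- consistent with the fact that the conjecture is still open.
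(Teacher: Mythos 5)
This statement is an open conjecture in the paper: the authors prove only the case $t=3$ (Lemma~\ref{lemma:aux}) and explicitly leave the general case unresolved, so there is no proof in the paper to compare against. Your proposal is an attempted proof strategy, not a proof, and you correctly diagnose where it breaks down. To be concrete about the gap: after removing $u$ and invoking the inductive hypothesis you obtain $t-2$ distinct non-zero sums $s_1,\dots,s_{t-2}$ of paths ending at $v$ inside $\bivec{K}_t\setminus u$, and you must then produce one further non-trivial path ending at $v$ whose sum avoids the set $T=\{0,s_1,\dots,s_{t-2}\}$ of size $t-1$. Nothing in the inductive hypothesis (nor in its ``dual'' form applied to paths leaving $u$ in $\bivec{K}_t\setminus v$) forces the resulting sum set $R$ to escape $T$: even in your most favourable subcase $\alpha=u$ you only get $|R|\ge t-1$, which is compatible with $R=T$ exactly, so the ``rigid case'' is not a corner case but the entire difficulty. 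The single constraint you extract from zero-sum-freeness, namely $-w'(v,u)\notin R$, excludes one value and cannot by itself rule out $R\subseteq T$ when $p>t$. Since no mechanism is offered to resolve this case, the induction does not close.

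Two further technical points. First, the order of operations in your normalization is inconsistent: you apply the inductive hypothesis to $w$ to obtain the sums $s_i$, and only afterwards switch to $w'$; but switching at the \emph{starting} vertices of the paths $Q_i$ (which may differ from path to path) shifts their sums by different amounts, so the set $T$ you work with under $w'$ is not the one you defined under $w$. This is repairable by normalizing first, but as written the argument conflates the two labellings. Second, the conjecture as stated requires $t-1$ distinct non-zero elements of $\mathbb{Z}_p$, which is only possible when $t\le p$; any inductive scheme must implicitly use that zero-sum-free labellings do not exist for large $t$, and your argument does not engage with this. None of this is a criticism of your honesty --- you flag the obstruction yourself --- but the proposal does not establish the statement, which remains open.
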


Note that Lemma~\ref{lemma:aux} resolves the case $t=3$, and if the conjecture is true for a given $t$, then, following the arguments from the proof of Theorem~\ref{thm:main2}, one can obtain $n(\mathbb{Z}_p)\leq t\lceil\frac{p-1}{t-1}\rceil+1$ whenever $p\geq 3$ is a prime.

\smallskip

As for lower bounds, it is easy to see that $n(\mathbb{Z}_q) \ge q+1$ for every integer $q \ge 2$. For this, fix some linear ordering $v_1, \ldots, v_{q}$ of the vertices of $\bivec{K}_{q}$ and define a $\mathbb{Z}_q$-arc-labelling of $\bivec{K}_q$ by setting the label of an arc $(v_i,v_j)$ to be equal to $1$ if $i<j$ and $0$ otherwise. Then every directed cycle in $\bivec{K}_q$ uses at least one and at most $q-1$ arcs of the form $(v_i,v_j)$ with $i<j$, and hence the sum of its arc-labels is non-zero in $\mathbb{Z}_q$. Hence, this  arc-labelling is zero sum-free, and shows that $n(\mathbb{Z}_q)>q$, as required.

Surprisingly, for non-cyclic Abelian groups we did not manage to obtain a general linear lower bound in terms of the group order. %seems Surprisingly, for non-cyclic Abelian groups $(A,+)$ we did not manage to find a linear lower bound on $n(A)$. As far as small groups are conWe checked by hand that , and we think that the lower bound $n(A) \ge |A|+1$ should hold in general. In fact we conjecture the following.

\begin{ques}\label{ourcon}
Do we have $n(A)>|A|$ for every non-trivial finite Abelian group $(A+)$?
\end{ques}

A natural way %of going about proving $n(A) \ge |A|+1$ 
to answer Question~\ref{ourcon} would be via the following product inequality. %By $A_1 \times A_2$ we denote the direct product of $A_1$ and $A_2$. 
\begin{ques}\label{seconcon}
Is it true that if $(A_1,+)$ and $(A_2,+)$ are non-trivial finite Abelian groups, then 
\begin{equation*}
 n(A_1 \times A_2)-1 \ge (n(A_1)-1)(n(A_2)-1)? 
\end{equation*}
\end{ques}
%Interestingly, Conjecture~\ref{seconcon} is equivalent to Conjecture~\ref{ourcon}. 
%Clearly, a positive answer to Question%Conjecture
%~\ref{ourcon} would result in a positive answer to Question%Conjecture
%~\ref{ourcon}. Vice-versa, suppose that the answer to Question%Conjecture
%~\ref{seconcon} is Yes. 
Let now $(A,+)$ be a non-trivial finite Abelian group and suppose that the answer to Question~\ref{seconcon} is positive. Then, by the Fundamental Theorem on Finite Abelian Groups, $A$ is isomorphic to the direct product $\mathbb{Z}_{q_1} \times \cdots \times \mathbb{Z}_{q_k}$, for some $q_1,\dots,q_k$. Therefore, it follows that $n(A)-1 \ge (n(\mathbb{Z}_{q_1})-1) \cdots (n(\mathbb{Z}_{q_k})-1) \ge q_1 \cdots q_k=|A|$. However, a positive answer to Question~\ref{seconcon} would actually prove much more. Let $\ell \in \mathbb{N}$ be arbitrary and let $A^\ell$ denote the $\ell$-fold direct product of $A$ with itself. Then by Theorem~\ref{thm:main1} we have
\begin{equation*}
(n(A)-1)^\ell \le n(A^\ell)-1 \le 8|A^\ell|-1<8|A|^\ell,
\end{equation*}
i.e. $n(A) \le 8^{1/\ell}|A|+1$. Taking the limit as $\ell \rightarrow \infty$ we would obtain $n(A) \le |A|+1$, which in turn would result in the exact answer $n(A)=|A|+1$.

\smallskip

%It would be very interesting to find out the exact value of $n(A)$. 
For small groups %using a clever implementation involving SAT-solvers due to Manfred Scheucher, 
we managed to check by computer~\cite{man} that $n(A)=|A|+1$ holds for all Abelian groups $(A,+)$ of order at most $6$, and we believe that this equality might actually be true in general. 
%The code for the computational results can be accessed here. 

\smallskip

As a final comment, let us mention that %it is easy to see that 
the proof of Lemma~\ref{lemma:main} would still work (without any noteworthy changes), if in the definition of $A$-completeness we would require paths of \emph{length at least two}. % , actually also guarantees that in the case (ii) we find a pair $u, v$ of vertices such that for every $a \in A$ there is a $u$-$v$-dipath whose label sum is $a$ and whose \emph{length is at least two}. 
Using this stronger statement in the proof of Theorem~\ref{thm:main1} one can show that in every labelling of the complete digraph on $n \ge 8|A|$ vertices with labels from an Abelian group $(A,+)$ there exists a directed cycle \emph{of length at least three} whose arc-labels sum up to zero. This slightly stronger statement has the following immediate consequence concerning the undirected version of the zero-sum cycle problem. %(given an edge-labelling of $K_n$, we can obtain an arc-labelling of $\bivec{K}_n$ by labelling every arc by the label of its underlying undirected edge).

\begin{corollary}
Let $(A,+)$ be a finite Abelian group, $n\ge 8|A|$, and let $w:E(K_n) \rightarrow A$ be an edge-labelling of the complete undirected graph of order $n$. Then there exists a cycle $C$ in $K_n$ such that $\sum_{e \in E(C)}{w(e)}=0$. 
\end{corollary}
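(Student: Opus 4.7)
The plan is to reduce this corollary directly to the strengthened version of Theorem~\ref{thm:main1} announced just above the statement, namely that every $A$-arc-labelling of $\bivec{K}_n$ with $n\ge 8|A|$ admits a directed cycle of length at least three whose arc-labels sum to zero. To bridge the undirected and the directed settings, I would first symmetrize the given labelling: define an $A$-arc-labelling $\widetilde w$ of $\bivec{K}_n$ by setting $\widetilde w(u,v):=w(\{u,v\})=\widetilde w(v,u)$ for every ordered pair of distinct vertices $u,v$ of $K_n$.

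Next, I would apply the strengthened statement to $\widetilde w$ and obtain a directed cycle $D=v_1\to v_2\to\cdots\to v_k\to v_1$ of length $k\ge 3$ with $\sum_{i=1}^{k}\widetilde w(v_i,v_{i+1})=0$ (indices mod $k$). By the very definition of a directed cycle in $\bivec{K}_n$, the vertices $v_1,\dots,v_k$ are pairwise distinct, and together with $k\ge 3$ this means that the underlying undirected edges $\{v_i,v_{i+1}\}$ form an honest cycle $C$ of length $k$ in $K_n$. Because $\widetilde w$ agrees on each arc of $D$ with the label of the corresponding undirected edge, we conclude that $\sum_{e\in E(C)}w(e)=\sum_{i=1}^{k}\widetilde w(v_i,v_{i+1})=0$, which is exactly what the corollary asserts.

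The only place where the length restriction $k\ge 3$ is genuinely used is this final step: without it, the theorem could return a digon $u\to v\to u$ whose sum would be $\widetilde w(u,v)+\widetilde w(v,u)=2w(\{u,v\})$, and such a two-arc ``cycle'' has no counterpart as a cycle in $K_n$ (and in fact need not even vanish when $|A|$ is even). Consequently, the only nontrivial work is the announced modification of Lemma~\ref{lemma:main} and of the proof of Theorem~\ref{thm:main1} that delivers the strengthened statement; once this is in hand, the corollary follows as described with no further obstacles, so I do not anticipate any truly hard step beyond carefully verifying that requiring paths of length at least two in the definition of $A$-completeness passes through the induction of Lemma~\ref{lemma:main} unchanged.
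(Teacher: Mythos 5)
Your proposal is correct and is precisely the intended argument: the paper presents the corollary as an immediate consequence of the strengthened (length $\ge 3$) version of Theorem~\ref{thm:main1}, obtained by symmetrizing the edge-labelling into an arc-labelling and reading off the resulting directed zero-sum cycle as an undirected one. You also correctly identify the role of the length restriction in excluding digons, which is exactly why the paper needs the strengthened statement rather than Theorem~\ref{thm:main1} itself.
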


\paragraph{Acknowledgement.} We would like to thank Manfred Scheucher very much for implementing and solving a SAT-model~\cite{man} to compute the values of $n(A)$ for groups of small order.

\end{document}